\newtheorem{theorem}{Theorem}[section]
\newtheorem{corollary}[theorem]{Corollary}
\newtheorem{definition}[theorem]{Definition}
\newtheorem{example}[theorem]{Example}
\newtheorem{remark}[theorem]{Remark}
\title{Sharp bounds on the least eigenvalue of a graph determined from edge clique partitions}
\author[1,2]{Domingos M. Cardoso}
\author[1,2]{In\^es Ser\^odio Costa}
\author[1,2]{Rui Duarte}
\affil[1]{\small Centro de Investiga\c{c}\~{a}o e Desenvolvimento em Matem\'atica e Aplica\c{c}\~{o}es}
\affil[2]{\small Departamento de Matem\'atica, Universidade de Aveiro, 3810-193, Aveiro, Portugal.}
\begin{document}

\maketitle

\begin{abstract}
Sharp bounds on the least eigenvalue of an arbitrary graph are presented. Necessary and sufficient (just sufficient) conditions for the lower (upper) bound to be attained are deduced using edge clique partitions. As an application, we prove that the least eigenvalue of the $n$-Queens' graph $\mathcal{Q}(n)$ is equal to $-4$ for every $n \ge 4$ and it is also proven that the multiplicity of this eigenvalue is $(n-3)^2$. Additionally, some results on the edge clique partition graph parameters are obtained.
\end{abstract}

\medskip

\noindent \textbf{Keywords:} {\footnotesize Least eigenvalue of a graph, edge clique partition, $n$-Queens' graph.}

\smallskip

\noindent \textbf{MSC 2020:} {\footnotesize 05C50, 05C70.}

\section{Introduction}\label{sec1}

There are many more published bounds on the largest eigenvalue of a graph than bounds on the least eigenvalue.
The book published in 2015 by Dragan Stevanovi\'c \cite{Stevanovic2015} provides an overview of the
developments on the largest eigenvalue of a graph obtained in the 10 years prior to its publication. For the
least eigenvalue of a graph only some bounds are known and most of them are only achieved in very particular cases.
Chapter 3 of the book published in 2015 by Zoran Stani\'c \cite{Stanic2015} is entirely devoted to inequalities for
the least eigenvalue of a graph. Recently, a lower bound on the least eigenvalue and a necessary and sufficient condition
for this lower bound to be attained was deduced in \cite[Corollary 3.1]{CioabaElzingaGregory2020} for scalar multiples
of graphs, using an approach motivated by results on line graphs and generalized line graphs. In this paper, regarding
the lower bound, a similar result motivated by edge clique partitions \cite{Orlin1977}, independently obtained in arXiv-preprint (2020) \cite[Theorem 3.3]{CardosoCostaDuarte2020}, is presented. A related result in the particular
context of geometric distance-regular graphs appears in \cite[Proposition 9.8]{DamKoolenTanaka2016}.

As an application, we consider the Queens' graph $\mathcal{Q}(n)$, which is obtained from the $n\times n$ chessboard
where its squares are the vertices of the graph and two of them are adjacent if and only if they are in the same row,
column or diagonal of the chessboard. It will be proved that the least eigenvalue of $\mathcal{Q}(n)$ is equal to $-4$
for every $n \ge 4$ and its multiplicity is $(n-3)^2$.

Throughout the text we just consider simple undirected graphs. The vertex set of a graph $G$ is denoted by $V(G)$ and its edge set by $E(G)$. An edge with end-vertices $i$ and $j$ is denoted by $ij$. If $E' \subseteq E(G)$, then $G[E']$ denotes the subgraph of $G$ induced by the end-vertices of the edges in $E'$. The maximum degree of the vertices in $G$ is denoted by $\Delta(G)$. A vertex subset where each pair of vertices are (aren't) the end-vertices of an edge is called a clique (stable set) of $G$ and the maximum number of vertices forming a clique (stable set) in $G$ is the clique (stability) number of $G$. A stable set of maximum cardinality is called a maximum stable set. The adjacency matrix of a graph $G$ is denoted $A(G)$ and its eigenvalues are also called the eigenvalues of $G$. The spectrum of $G$, i.e. the multiset of eigenvalues is denoted by $\sigma(G)$. If $\mu$ is an eigenvalue of a graph $G$, the eigenspace associated to $\mu$ is denoted by $\mathcal{E}_G(\mu)$.\\

This paper is organized as follows. In the next section, we recall some useful edge clique partition graph
parameters. Furthermore, some results on these parameters are obtained and families of graphs with a particular edge clique partition property are presented. Section~\ref{sec3} includes the main results of this paper. Bounds on the least eigenvalue of a graph are deduced and necessary and sufficient (just sufficient) conditions for which the lower (upper) bound is attained are proven. Section~\ref{sec4} is devoted to the application of the main results to the Queens' graph $\mathcal{Q}(n)$. From this application, we conclude that the lower bound on the least eigenvalue of $\mathcal{Q}(n)$ is constant and attained for $n \ge 4$. This paper finishes with some conclusions and remarks in
Section~\ref{sec5}.

\section{Edge clique partitions}\label{sec2}

Edge clique partitions (ECP for short) were introduced in \cite{Orlin1977}, where the \textit{content} of a graph $G$,
denoted by $C(G)$, was defined as the minimum number of edge disjoint cliques whose union includes all the edges of $G$. Such minimum ECP is called in \cite{Orlin1977} \textit{content decomposition} of $G$. As proved in \cite{Orlin1977}, in general, the determination of $C(G)$ is $\mathbf{NP}$-Complete. Recently,
in \cite[Corollary 3.2]{ZhouDam2021}, a sharp lower bound on the content of a graph in terms of its largest eigenvalue,
minimum degree and clique number is deduced.

\begin{definition}(Clique degree and maximum clique degree)\label{new_graph_parameters}
	Consider a graph $G$ and an ECP, $P=\{ E_i \mid i \in I \}$. Then $V_i=V(G[E_i])$ is a clique of $G$ for every $i \in I$. For any $v \in V(G)$, the clique degree of $v$ relative to $P$, denoted $m_v(P)$, is the number of cliques $V_i$ containing the vertex $v$, and the maximum clique degree of $G$ relative to P, denoted $m_G(P)$, is the maximum of clique degrees of the vertices of $G$ relative to $P$.
\end{definition}

From Definition~\ref{new_graph_parameters}, considering an ECP, $P=\{ E_i \mid i \in I \}$, the parameters $m_v(P)$ and $m_G(P)$ can be expressed as follows.

\begin{eqnarray}
	m_v(P) &=& \{i\in I \mid v\in V(G[E_i])\},\qquad \forall v \in V(G);\label{parameter_m}\\
	m_G(P)&=& \max\{ m_v \mid v \in V(G) \}. \label{parameter_mv}
\end{eqnarray}

\begin{remark}\label{content_numbe_lower_bound_remark}
	It is clear that if $P$ is an ECP of $G$, then $m_G(P)$ is not greater than $|P|$. In particular, if $P$ is a content decomposition of $G$, then $m_G(P) \le C(G)$.
\end{remark}

\begin{example}\label{ex1}
	The Figure~\ref{figura_1} depicts a graph $G$ such that $V(G)=\{1, 2, 3, 4, 5\}$ and the ECP, $P=\{\{12, 23, 31\}, \{34, 45, 53\}, \{24\}\}$, which is a content decomposition of $G$. From Definition~\ref{new_graph_parameters} it follows that $m_v(P)=2$, if $v \in \{2, 3, 4\}$ and $m_v(P)=1$, if $v \in \{1, 5\}$. Therefore, $m_G(P)=2$.
\end{example}

\begin{figure}[ht]
	\centering
	\centering
	
	\begin{tikzpicture}
		%vertices G
		\begin{scope}[every node/.style={circle,thick,draw}]
			\node (1) at (-1,1) {1};
			\node (2) at (-1,-1) {2};
			\node (3) at (0,0) {3};
			\node (4) at (1,-1) {4};
			\node (5) at (1,1) {5};
			\node (1') at (4,1) {1};
			\node (2') at (4,-1) {2};
			\node (3') at (5,0) {3};
			\node (4') at (6,-1) {4};
			\node (5') at (6,1) {5};
		\end{scope}
		\draw (0,-1.5) node {$G$};
		
		%arestas G
		\begin{scope}[>={Stealth[black]},
			every edge/.style={draw=black,line width=0.5pt}]
			\path (1) edge node {} (2);
			\path (1) edge node {} (3);
			\path (2) edge node {} (3);
			\path (2) edge node {} (4);
			\path (3) edge node {} (4);
			\path (3) edge node {} (5);
			\path (4) edge node {} (5);
			\path (1') edge[color=red] node {} (2');
			\path (1') edge[color=red] node {} (3');
			\path (2') edge[color=red] node {} (3');
			\path (2') edge[color=cyan] node {} (4');
			\path (3') edge[color=green] node {} (4');
			\path (3') edge[color=green] node {} (5');
			\path (4') edge[color=green] node {} (5');
		\end{scope}
		\draw (4.65,0.55) node {\color{red}$a$};
		\draw (3.85,0) node {\color{red}$a$};
		\draw (4.65,-0.6) node {\color{red}$a$};
		\draw (5,-1.15) node {\color{cyan}$c$};
		\draw (5.35,0.55) node {\color{green}$b$};
		\draw (6.15,0) node {\color{green}$b$};
		\draw (5.35,-0.6) node {\color{green}$b$};
		
	\end{tikzpicture}
	
	\caption{A graph $G$ with a content decomposition where, on the right, the edges with the same color, among the colors {\color{red}$a$}, {\color{green}$b$} and {\color{cyan} $c$}, belong to the same part.} \label{figura_1}
\end{figure}
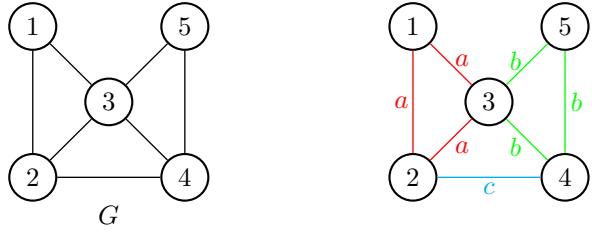

\begin{remark}
	The complete graphs $K_n$ are the unique connected graphs that admit the trivial ECP, $P=\{E(K_n)\}$, and thus
	$m_{K_n}(P)=1$. It is also immediate that if $P$ is an ECP of a graph $G$, then $m_{G}(P) \le \Delta(G)$. In the particular case of a tree, $T$, since each part of its unique ECP, $P=\{\{e\} \mid e \in E(T)\}$, is singleton, $m_{T}(P)= \Delta (T)$.
\end{remark}

The next theorem allows the construction of families of connected graphs $\mathcal{G}(H)=\{G_k \mid k \ge m_{H}(P)\}$, obtained from an arbitrary connected graph $H$ with an ECP, $P$, where each graph $G_k \in \mathcal{G}(H)$ has $H$ as a subgraph and admits an ECP, $P_k$, such that $m_{G_k}(P_k)=k$.

\begin{theorem}\label{family_of_graphs}
	Let $H$ be a connected graph with an ECP, $P$. Then for every $k \ge m_{H}(P)$ there exists a connected graph $G_k$
	which has $H$ as a subgraph and admits an ECP, $P_k$, such that $m_{G_k}(P_k)=k$.
\end{theorem}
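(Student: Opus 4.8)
The plan is to give an explicit, uniform construction: starting from $H$, for each $k \ge m_H(P)$ I would attach pendant edges to a single, carefully chosen vertex. Write $m = m_H(P)$ and fix a vertex $v \in V(H)$ that attains the maximum, i.e. $m_v(P) = m$ (such a vertex exists by \eqref{parameter_mv}). For $k = m$ there is nothing to do: take $G_m = H$ and $P_m = P$, so that $m_{G_m}(P_m) = m = k$.

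For $k > m$, I would define $G_k$ as the graph obtained from $H$ by adding $k - m$ new pendant vertices $w_1, \dots, w_{k-m}$, each joined only to $v$; equivalently, I attach $k-m$ copies of $K_2$ at $v$. Since $H$ is connected and each new vertex is adjacent to $v$, the graph $G_k$ is connected and contains $H$ as a subgraph. As the edge clique partition I would take $P_k = P \cup \{\{vw_1\}, \dots, \{vw_{k-m}\}\}$, each new part being the single-edge clique $vw_i$. The parts of $P$ already cover $E(H)$ and are edge-disjoint; the new parts cover exactly the $k-m$ new pendant edges and are disjoint from one another and from $E(H)$, so $P_k$ is indeed an ECP of $G_k$.

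It then remains to compute the clique degrees relative to $P_k$. Each new part contains $v$, so $m_v(P_k) = m_v(P) + (k - m) = m + (k - m) = k$. Every other vertex of $H$ keeps its clique degree, which is at most $m \le k$, and each pendant vertex $w_i$ lies in exactly one part, so $m_{w_i}(P_k) = 1 \le k$. Hence the maximum is attained at $v$ and $m_{G_k}(P_k) = k$, as required.

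The construction is elementary, so there is no real obstacle; the only point that genuinely requires care is the choice of the vertex to which the pendant edges are attached. If the new edges were attached to a vertex of clique degree strictly less than $m$, the maximum clique degree of $G_k$ would not in general equal $k$ (it could remain $m$, or otherwise fail to match $k$). It is therefore essential that $v$ realizes $m_H(P)$: attaching all $k-m$ new edges at the same maximizing vertex is precisely what forces the resulting maximum clique degree to be exactly $k$.
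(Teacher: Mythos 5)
Your proof is correct, but it follows a genuinely different construction from the paper's. You fix a vertex $v$ attaining $m_v(P)=m_H(P)$ and, for each $k>m_H(P)$, attach $k-m_H(P)$ pendant edges at $v$, taking each new edge as a singleton part of the ECP; this raises the clique degree of $v$ alone to exactly $k$ while leaving all other clique degrees unchanged (and the pendant vertices at clique degree $1$), so $m_{G_k}(P_k)=k$ in one stroke, with no induction needed. The paper instead proceeds inductively: $G_{k+1}$ is built from $G_k$ by taking a disjoint copy $G_k'$ and joining each vertex to its image under a permutation $\pi_k$ by a perfect matching, with $P_{k+1}=P_k\cup P_k'\cup\{\{e\}\mid e\in M_k\}$, so that \emph{every} vertex's clique degree increases by $1$. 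Your construction is more economical (the graph grows by $k-m_H(P)$ vertices rather than doubling at each step) and suffices for the theorem as stated; what the paper's heavier construction buys is the material of Section~\ref{sec5}: because all clique degrees increase uniformly and the copies are joined by a matching, the authors can propagate eigenvector conditions (the vector $Y$ with $Y_{v'}=-X_v$), obtain $\sigma(G_{k+1})=\sigma(G_k)\pm 1$ for the identity permutation, and produce families where the lower bound $-k$ of Theorem~\ref{Main_result_1} is actually attained --- none of which would hold for the pendant-edge construction, since in your $G_k$ only the single vertex $v$ has maximal clique degree, which forces any candidate eigenvector for $-k$ to vanish. One small caveat: your closing claim that attaching at a maximizing vertex is \emph{essential} is overstated; one could attach $k-m_u(P)$ pendant edges at an arbitrary vertex $u$ instead --- what matters is only that the count of new parts through the chosen vertex brings its clique degree to exactly $k$, and that $k$ is at least the old maximum.
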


\begin{proof}
	Consider a connected graph $H$ with an ECP, $P$, and a family of graphs $\mathcal{G}(H)=\{G_k \mid k \ge m_{H}(P)\}$,
	where $G_{m_{H}(P)}=H$ and for $k \ge m_H(P)$, each graph $G_{k+1}$ is obtained from $G_{k}$ as follows.
	Produce a copy, $G'_k$, of $G_k$ and consider a permutation $\pi_k$ on the vertices of $G_k$. Connect by an edge each vertex $v$ in $G_k$ to $\pi_k(v)=v' \in V(G'_k)$. Then, by construction, the graphs in $\mathcal{G}(H)$ are
	connected having $H$ as a subgraph. Furthermore, they are such that $V(G_{k+1}) = V(G_k) \cup V(G'_k)$ and
	$E(G_{k+1})=E(G_k) \cup E(G'_k) \cup M_k$, where $M_k=\{vv' \mid v \in V(G_k) \; \wedge \; v' \in V(G'_k)\}$, that is,
	$M_k$ is the matching corresponding to the assignment of the vertices in $G_k$ to their images by $\pi_k$ on its
	copies in $G'_k$. Assuming that $P_k$ is an ECP of $G_k$, for which $m_{G_k}(P_k)=k$ and $P'_k$ is the corresponding
	ECP of $G'_k$, then $P_{k+1} = P_k \cup P'_k \cup \{ \{e\} \mid e \in M_k\}$ is an ECP of $G_{k+1}$ for which $m_{G_{k+1}}(P_{k+1})=k+1$. Therefore, by induction on $k$, it follows that for every $k \ge m_{H}(P)$,
	$m_{G_k}(P_k)=k$.
\end{proof}

The above defined family of graphs
\begin{equation}
	\mathcal{G}(H) = \{G_k \mid k \ge m_{H}(P)\} \label{family_of graphs}
\end{equation}
depends from the initial graph $G_{m_{H}(P)}=H$ and from the permutations $\pi_k$. If the chosen graph $H$ admits an ECP, $P$, which is a content decomposition, as it is the case of the graph $G$ in Example~\ref{ex1}, it is immediate that for every $k \ge m_{H}(P)$, independently of the chosen permutations $\pi_k$, $P_k$ is a content decomposition of $G_k$. So this property is invariant to the permutations $\pi_k$.

\section{Main results}\label{sec3}

Using the above defined graph parameters, the next theorem states a lower bound on the least eigenvalue of a graph and a necessary and sufficient condition for to be attained in a particular ECP.

\begin{theorem}\label{Main_result_1}
	Let $P=\{ E_i \mid i \in I \}$ be an ECP of a graph $G$, $m=m_G(P)$ and $m_v=m_v(P)$ for every $v \in V(G)$. Then
	\begin{enumerate}
		\item If $\mu$ is an eigenvalue of $G$, then $\mu \ge -m$. \label{marca}
		\item $-m$ is an eigenvalue of $G$ if and only if there exists a vector $X \ne \mathbf{0}$ such that
		\begin{enumerate}
			\item $\sum \limits_{j \in V(G[E_i])} x_j = 0$, for every $i \in I$ and \label{cond1}
			\item $\forall v \in V(G) \;\; x_v=0$ whenever $m_v \ne m$. \label{cond2}
		\end{enumerate}\label{marca2}
		In the positive case, $X$ is an eigenvector associated with the eigenvalue $-m$.
	\end{enumerate}
\end{theorem}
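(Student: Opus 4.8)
The plan is to express the adjacency matrix directly in terms of the clique structure carried by $P$ and then exploit positive semidefiniteness. For each $i \in I$, let $b_i$ denote the characteristic column vector of the clique $V_i = V(G[E_i])$, so that $(b_i)_v = 1$ if $v \in V_i$ and $(b_i)_v = 0$ otherwise. Since the cliques $\{V_i \mid i \in I\}$ partition the edge set of $G$, each edge of $G$ is covered exactly once, and the adjacency matrix decomposes as
\[
A(G) = \sum_{i \in I} \left( b_i b_i^{\top} - D_i \right),
\]
where $D_i$ is the diagonal $0$--$1$ matrix supported on $V_i$. By the definition of the clique degree in \eqref{parameter_m}, the diagonal corrections collect to $\sum_{i \in I} D_i = \mathrm{diag}(m_v)$, the diagonal matrix with entry $m_v$ at position $v$. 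Hence
\[
A(G) + m I = \sum_{i \in I} b_i b_i^{\top} + \mathrm{diag}(m - m_v).
\]

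First I would prove part~\ref{marca}. In the identity above each $b_i b_i^{\top}$ is a rank-one positive semidefinite matrix, and since $m = \max_v m_v$ we have $m - m_v \ge 0$ for every $v$, so the diagonal term is positive semidefinite as well. Therefore $A(G) + mI \succeq 0$, which forces every eigenvalue $\mu$ of $G$ to satisfy $\mu + m \ge 0$, that is, $\mu \ge -m$.

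Next, to establish part~\ref{marca2}, I would use the standard fact that for a positive semidefinite matrix $M$ one has $MX = \mathbf{0}$ if and only if $X^{\top} M X = 0$ (write $M = L L^{\top}$, so $X^{\top}MX = \|L^{\top}X\|^2$). Applying this to $M = A(G) + mI$ and expanding the quadratic form gives
\[
X^{\top}\bigl(A(G) + mI\bigr)X = \sum_{i \in I} \bigl( b_i^{\top} X \bigr)^2 + \sum_{v \in V(G)} (m - m_v)\, x_v^2,
\]
a sum of nonnegative terms. Thus $-m$ is an eigenvalue, i.e.\ this form vanishes for some $X \ne \mathbf{0}$, precisely when every summand vanishes: $b_i^{\top} X = \sum_{j \in V(G[E_i])} x_j = 0$ for all $i \in I$ (condition~\ref{cond1}), and $(m - m_v)\, x_v^2 = 0$, equivalently $x_v = 0$ whenever $m_v \ne m$ (condition~\ref{cond2}). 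Any such $X$ then lies in $\ker\bigl(A(G) + mI\bigr)$ and is therefore an eigenvector associated with $-m$, which also yields the final assertion.

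The only genuine content is the decomposition $A(G) = \sum_{i} b_i b_i^{\top} - \mathrm{diag}(m_v)$; once it is in hand, both statements follow immediately from the semidefiniteness of $A(G) + mI$ and the equivalence $MX = \mathbf{0} \Leftrightarrow X^{\top} M X = 0$ for positive semidefinite $M$. The point I would be most careful about is the \emph{edge-disjointness} of the partition: it is exactly what guarantees that summing the clique matrices $b_i b_i^{\top} - D_i$ reproduces each off-diagonal entry of $A(G)$ once and only once (a mere edge cover would overcount off-diagonal entries), while the diagonal bookkeeping is precisely what produces the clique degrees $m_v$.
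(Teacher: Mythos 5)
Your proof is correct. At its core it uses the same identity as the paper: your decomposition $A(G)+mI=\sum_{i\in I}b_ib_i^{\top}+\mathrm{diag}(m-m_v)$ is exactly the matrix form of the quadratic-form expansion the paper carries out, namely $X^{\top}A(G)X+m\|X\|^{2}=\sum_{i\in I}\bigl(\sum_{v\in V(G[E_i])}x_v\bigr)^{2}+\sum_{v\in V(G)}(m-m_v)x_v^{2}$, and both arguments hinge on the two facts you correctly flag: the $E_i$ are pairwise edge-disjoint and each $E_i$ is the full edge set on $V_i$. Where you genuinely diverge is the logical packaging. The paper proves part~\ref{marca} by evaluating the quadratic form at an eigenvector of $\mu$, and proves the converse half of part~\ref{marca2} by a Rayleigh-quotient argument ($-m=X^{\top}A(G)X/\|X\|^{2}\ge\mu$ combined with part~\ref{marca}), leaving the claim that $X$ is actually an eigenvector as ``immediate.'' You instead establish $A(G)+mI\succeq 0$ once at the matrix level and then invoke the kernel characterization $X^{\top}MX=0\Leftrightarrow MX=\mathbf{0}$ for positive semidefinite $M$. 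That buys a cleaner finish: membership of $X$ in $\ker\bigl(A(G)+mI\bigr)$ \emph{is} the statement that $X$ is a $(-m)$-eigenvector, so the eigenvalue claim, the equivalence with conditions~\ref{cond1} and~\ref{cond2}, and the final eigenvector assertion all come out of a single argument, with no Rayleigh quotient and no separate appeal to part~\ref{marca}. The paper's version is marginally more elementary (nothing beyond expanding a square is used), while yours makes explicit the Gram-plus-nonnegative-diagonal structure of $A(G)+mI$, which is precisely the line-graph-style viewpoint of \cite{CioabaElzingaGregory2020} mentioned in the introduction.
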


\begin{proof}
	Let $A(G)$ be the adjacency matrix of $G$.
	\begin{enumerate}
		\item Let $X$ be an eigenvector of $A(G)$ associated with an eigenvalue $\mu$. Then
		\begin{align*}
			(\mu+m) \| X \|^2 & = X^T A(G) X + m\| X \|^2\\
			& = \sum_{i \in I} \sum_{uv \in E_i} \left( 2 x_u x_v \right) + m \| X \|^2\\
			& = \sum_{i \in I} \left( \sum_{v \in V(G[E_i])} x_v \right)^2
			- \sum \limits_{v \in V(G)}{m_vx_v^2} + m \| X \|^2\\
			& = \sum_{i \in I} \left( \sum_{v \in V(G[E_i])} x_v \right)^2 +
			\sum \limits_{v \in V(G)} (m-m_v)x_v^2 \ge 0.
		\end{align*}\label{lower_boumd}
		\item If $-m$ is an eigenvalue of $G$, then, from the proof of item \ref{lower_boumd}, equalities \ref{cond1} and
		\ref{cond2} follow. Conversely, if there exists a vector $X \ne \mathbf{0}$ for which \ref{cond1} and \ref{cond2}
		hold, then $X^T A(G) X + m\| X \|^2 = 0$.\\
		Assuming that $\mu$ is the least eigenvalue of $G$, $-m = \frac{X^T A(G) X}{\| X \|^2} \ge \mu$.
		By item~\ref{lower_boumd} we have $\mu \ge -m$ and hence $\mu = - m$. In the positive case, it is immediate
		that $X$ is an eigenvector associated to the eigenvalue $-m$.
		\qedhere
	\end{enumerate}
\end{proof}

From Theorem~\ref{Main_result_1}, it follows that the best lower bound for the least eigenvalue of a graph $G$ is obtained from an ECP, $P$, such that $m_G(P) \le m_G(P')$ for every ECP, $P'$, of $G$. Theorem~\ref{Main_result_1} also provides the spectral lower bound for the content of a graph which appears in \cite{Hoffman1972} and is now stated in the next corollary.

\begin{corollary}
	Let $\mu$ be the least eigenvalue of a graph $G$. Then $-\mu \le C(G)$.
\end{corollary}

\begin{proof}
	If $P$ is a content decomposition of $G$, then, according to Remark~\ref{content_numbe_lower_bound_remark},
	$m_G(P) \le C(G)$. By Theorem~\ref{Main_result_1} $-m_G(P) \le \mu$ and so $-\mu \le C(G)$.
\end{proof}

The following corollaries are also direct consequences of Theorem~\ref{Main_result_1}.

\begin{corollary}
	Let $G$ be a graph of order $n$ and let $X$ be a vector of $\mathbb{R}^n \setminus \{\mathbf{0}\}$. Then $X \in \mathcal{E}_{G}(-m)$ if and only if the conditions \ref{cond1} and \ref{cond2} of Theorem~\ref{Main_result_1}
	hold.
\end{corollary}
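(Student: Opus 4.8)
The plan is to observe that the quadratic-form computation carried out in the proof of item~\ref{lower_boumd} of Theorem~\ref{Main_result_1} is a \emph{purely algebraic} identity, valid for every $X \in \mathbb{R}^n$ and not only for eigenvectors. First I would record, for an arbitrary $X$, the identity
\[
X^T\bigl(A(G)+mI\bigr)X \;=\; \sum_{i\in I}\Bigl(\sum_{v\in V(G[E_i])}x_v\Bigr)^2 \;+\; \sum_{v\in V(G)}(m-m_v)\,x_v^2,
\]
obtained by expanding $\bigl(\sum_{v\in V(G[E_i])}x_v\bigr)^2$ on each clique $V(G[E_i])$ and using the bookkeeping $\sum_{i\in I}\sum_{v\in V(G[E_i])}x_v^2=\sum_{v\in V(G)}m_vx_v^2$; none of these manipulations invokes an eigenvector hypothesis. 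Since $m=m_G(P)\ge m_v$ for every $v$, the right-hand side is a sum of nonnegative terms, so it vanishes if and only if \emph{each} term is zero, that is, if and only if $\sum_{v\in V(G[E_i])}x_v=0$ for all $i\in I$ (condition~\ref{cond1}) and $x_v=0$ whenever $m_v\ne m$ (condition~\ref{cond2}).

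Next I would set $B:=A(G)+mI$ and invoke item~\ref{marca} of Theorem~\ref{Main_result_1}, which states that every eigenvalue of $A(G)$ is at least $-m$; equivalently, $B$ is positive semidefinite. For such a matrix one has $X^TBX=\|B^{1/2}X\|^2$, whence the standard equivalence $X^TBX=0 \iff B^{1/2}X=\mathbf{0} \iff BX=\mathbf{0}$. Because $BX=\mathbf{0}$ is exactly $A(G)X=-mX$, the kernel of $B$ coincides with $\mathcal{E}_G(-m)$.

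Finally, for a fixed nonzero $X$ I would chain the two observations to obtain
\[
X\in\mathcal{E}_G(-m) \iff BX=\mathbf{0} \iff X^TBX=0 \iff \text{conditions \ref{cond1} and \ref{cond2} hold},
\]
which is precisely the asserted biconditional. I expect the only genuinely delicate step to be the implication $X^TBX=0\Rightarrow BX=\mathbf{0}$: a null value of a quadratic form forces the vector into the kernel only for semidefinite forms, and here this is exactly what the lower bound of item~\ref{marca} supplies. Everything else amounts to reading the identity from the proof of Theorem~\ref{Main_result_1} for an arbitrary vector rather than for an eigenvector, so the corollary upgrades the mere \emph{existence} statement of item~\ref{marca2} to a complete description of the eigenspace $\mathcal{E}_G(-m)$.
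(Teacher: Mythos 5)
Your proof is correct and follows essentially the same route the paper intends: the paper states this corollary without proof, as a direct consequence of Theorem~\ref{Main_result_1}, and the content of that consequence is precisely your argument --- the quadratic-form identity from the proof of item~\ref{lower_boumd} read for an arbitrary vector (giving $X^T(A(G)+mI)X=0$ if and only if conditions \ref{cond1} and \ref{cond2} hold), combined with positive semidefiniteness of $A(G)+mI$ (from item~\ref{marca}) to pass between $X^T(A(G)+mI)X=0$ and $(A(G)+mI)X=\mathbf{0}$. Your write-up simply makes explicit the kernel step that the paper compresses into ``it is immediate that $X$ is an eigenvector'' at the end of the proof of Theorem~\ref{Main_result_1}, so there is no gap and no genuinely different method.
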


\begin{corollary}\label{cor_4}
	Let $P$ be an ECP of a graph $G$. If $-m_G(P)$ is an eigenvalue of $G$, then it is the least eigenvalue of $G$ and for every ECP of $G$, $P'$, $m_G(P') \ge m_G(P)$.
\end{corollary}

Applying Theorem~\ref{Main_result_1}-\ref{lower_boumd} to the graph $G$ with the ECP, $P$, of Example~\ref{ex1}, it
follows that its least eigenvalue $\mu$ is not less than $-m_{G}(P) = -2$. Furthermore, since the necessary and
sufficient conditions \ref{cond1} and \ref{cond2} of Theorem~\ref{Main_result_1} are not fulfilled in the ECP, $P$ (actually, $G$ does not have an ECP fulfilling the necessary and sufficient conditions of Theorem~\ref{Main_result_1}-2), $-m_{G}(P) < \mu$. Additionally, there is no induced subgraph $H$ of $G$ with an ECP, $P'$, such that its least eigenvalue is $\mu' = - m_H(P') \ne -1$. Otherwise, taking into account that the eigenvalues of $H$ interlace the eigenvalues of $G$ \cite[Corollary 1.3.12]{CRS2010}, we obtain
$$
-2 = -m_{G}(P) < \mu \le \mu' = - m_H(P') < -1,
$$
which is a contradiction. It should be noted that $m_H(P')=1$ implies that $H$ is a complete graph.\\

As it is well known, the least eigenvalue of the generalized line graphs (see, e.g. \cite[Def. 1.2.3]{CRS2010}),
which includes the line graphs, is not less that $-2$. However, not every graph with least eigenvalue not less than $-2$ is a generalized line graph. For instance, the least eigenvalue of the Petersen graph is $-2$ and it is not a generalized line graph. A connected graph with least eigenvalue not less than $-2$ which is not a generalized line graph is called exceptional graph \cite[p. 154]{CRS2010}. From Theorem~\ref{Main_result_1}, it follows that a graph $G$ such that $m_{G}(P) = 2$, for some ECP, $P$, is either a generalized line graph or an exceptional graph.\\

Now, as a corollary of Theorem~\ref{Main_result_1}, we state a sharp upper bound on the least eigenvalue of a graph.

\begin{corollary}\label{Main_result_2}
	Let $G$ be a graph with least eigenvalue $\mu$. Assume that $H$ is an induced subgraph of $G$ for which there exists an ECP, $P'$, fulfilling the conditions \ref{cond1} and \ref{cond2} of Theorem~\ref{Main_result_1}. Then
	\begin{enumerate}
		\item $\mu \le - m_H(P')$. \label{condA}
		\item If $G$ admits an ECP, $P$, such that $m_G(P)=m_H(P')$, then $\mu = - m_H(P')$.\label{condB}
	\end{enumerate}
\end{corollary}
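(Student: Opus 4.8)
The plan is to deduce Corollary~\ref{Main_result_2} directly from Theorem~\ref{Main_result_1} together with Cauchy interlacing, so that essentially no new computation is needed. The key observation is that the hypothesis on $H$ provides a concrete eigenvector for the least eigenvalue of $H$, and interlacing then propagates the resulting value upward to $G$.

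First I would establish part~\ref{condA}. Since $H$ is an induced subgraph of $G$ admitting an ECP $P'$ that satisfies conditions~\ref{cond1} and~\ref{cond2} of Theorem~\ref{Main_result_1}, the converse direction of item~\ref{marca2} of that theorem guarantees that $-m_H(P')$ is an eigenvalue of $H$; moreover, by Corollary~\ref{cor_4}, it is in fact the \emph{least} eigenvalue of $H$. Denoting by $\mu_H$ the least eigenvalue of $H$, we thus have $\mu_H = -m_H(P')$. Because $H$ is an induced subgraph of $G$, the eigenvalues of $H$ interlace those of $G$ (see \cite[Corollary 1.3.12]{CRS2010}); in particular, the least eigenvalue satisfies $\mu \le \mu_H = -m_H(P')$, which is exactly the desired inequality.

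Next I would prove part~\ref{condB}. Suppose additionally that $G$ admits an ECP $P$ with $m_G(P) = m_H(P')$. Applying item~\ref{marca} of Theorem~\ref{Main_result_1} to the ECP $P$ of $G$ yields the lower bound $\mu \ge -m_G(P) = -m_H(P')$. Combining this with the upper bound $\mu \le -m_H(P')$ from part~\ref{condA} forces $\mu = -m_H(P')$, completing the proof.

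I do not anticipate a genuine obstacle here, since the statement is a packaging of already-proven facts; the only point requiring care is verifying that interlacing applies in the required direction. Concretely, one must confirm that the least eigenvalue of an induced subgraph is never smaller than the least eigenvalue of the whole graph, so that $\mu \le \mu_H$ rather than the reverse. This is precisely the content of the cited interlacing corollary and is the single place where the induced-subgraph hypothesis is used.
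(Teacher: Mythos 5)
Your proof is correct and takes essentially the same approach as the paper's: both identify $-m_H(P')$ as the least eigenvalue of $H$ via Theorem~\ref{Main_result_1}, use interlacing of induced-subgraph eigenvalues for part~\ref{condA}, and combine the lower bound of Theorem~\ref{Main_result_1}, item~\ref{marca}, applied to $P$ with part~\ref{condA} to get part~\ref{condB}. Your explicit appeal to Corollary~\ref{cor_4} merely spells out what the paper's opening sentence leaves implicit.
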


\begin{proof}
	If $\mu'$ is the least eigenvalue of $H$, then, by Theorem~\ref{Main_result_1}, $\mu' = - m_H(P')$.
	\begin{enumerate}
		\item Since $H$ is an induced subgraph of $G$, the eigenvalues of $H$ interlace the eigenvalues of $G$. In particular, $\mu \le \mu' = - m_H(P')$.
		\item Assume that $G$ admits an ECP, $P$, such that $m_G(P) = m_H(P')$. Since, by Theorem~\ref{Main_result_1}-\ref{lower_boumd}, $-m_G(P) \le \mu$, it follows that $-m_G(P) \le \mu \le \mu' = - m_H(P')$ and thus $\mu = - m_H(P')$. \qedhere
	\end{enumerate}
\end{proof}
\begin{remark}
	Item \ref{condB} of Corollary~\ref{Main_result_2} states a sufficient condition for $\mu = - m_H(P')$ when $\mu$ is the least eigenvalue of a graph $G$, $H$ is a subgraph of $G$ and $P'$ is an ECP of $H$ fulfilling the conditions \ref{cond1} and \ref{cond2} of Theorem~\ref{Main_result_1}. However, this condition is not a necessary condition for $\mu = - m_H(P')$.
	
	For instance, consider that $G$ is the graph depicted in Figure~\ref{figura_1-B} (which has a subgraph $H$ induced by the vertex subset $\{1, 2, 3, 4\} \subseteq V(G)$ whose ECP, $P'$, fulfills the conditions \ref{cond1} and \ref{cond2} of Theorem~\ref{Main_result_1}). Despite $\mu = -m_H(P')$, there is no ECP, $P$, of $G$ such that $m_G(P) = m_H(P')$.
\end{remark}

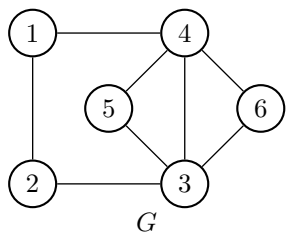
\begin{figure}[ht]
	\centering
	\begin{tikzpicture}
		%vertices G
		\begin{scope}[every node/.style={circle,thick,draw}]
			\node (1) at (-1,1) {1};
			\node (2) at (-1,-1) {2};
			\node (3) at (1,-1) {3};
			\node (4) at (1,1) {4};
			\node (5) at (0,0) {5};
			\node (6) at (2,0) {6};
		\end{scope}
		\draw (0.5,-1.5) node {$G$};
		
		%arestas G
		\begin{scope}[>={Stealth[black]},
			every edge/.style={draw=black,line width=0.5pt}]
			\path (1) edge node {} (2);
			\path (1) edge node {} (4);
			\path (2) edge node {} (3);
			\path (3) edge node {} (4);
			\path (3) edge node {} (5);
			\path (3) edge node {} (6);
			\path (4) edge node {} (5);
			\path (4) edge node {} (6);
		\end{scope}
		
	\end{tikzpicture}
	
	\caption{A graph $G$ with least eigenvalue $-2$ and a subgraph $H$ induced by the vertex subset $\{1, 2, 3, 4\} \subseteq V(G)$ which admits an ECP, $P'$, fulfilling the hypothesis of Corollary~\ref{Main_result_2}, for which $m_H(P')=2$ but there is no ECP, $P$, of $G$ such that $m_G(P)=m_H(P')$.} \label{figura_1-B}
\end{figure}

From Corollary~\ref{Main_result_2}, we may conclude that the best upper bound for the least eigenvalue of a graph $G$ is obtained from an induced subgraph $H$ having an ECP, $P$, fulfilling the conditions \ref{cond1} and \ref{cond2} of Theorem~\ref{Main_result_1}, such that $m_{H}(P) \ge m_{H'}(P')$ for every induced subgraph $H'$ and every ECP $P'$ of $H'$, fulfilling the conditions \ref{cond1} and \ref{cond2} of Theorem~\ref{Main_result_1}.

\section{An application}\label{sec4}

We start this section with some historical notes about the $n$-Queens' graph which includes its definition. After that, using Theorem~\ref{Main_result_1}, it is proven that the least eigenvalue of the $n$-Queens' graph is equal to $-4$ and its multiplicity is $(n-3)^2$, for every $n \ge 4$.

\subsection{Some historical notes related with the $n$-Queens' graph}\label{sec4.1}
The problem of placing 8 queens on a chessboard such that no two queens attack each other -- i.e. such that there are no queens in the same row, column or diagonal of the chessboard -- was first posed, in 1848, by M. Bezzel, a German chess player \cite{Bezzel}. The German mathematician and physicist Gauss had the knowledge of this problem and found 72 solutions. However, according to \cite{BellStevens}, the first to solve the problem by finding all 92 solutions was F. Nauck in 1850 \cite{Nauck}. As later claimed by Gauss, this number is indeed the total number of solutions. The proof that there is no more solutions was published by E. Pauls in 1874 \cite{Pauls}. The $n$-Queens' problem is a generalization of the above problem, consisting of placing $n$ non attacking queens on $n\times n$ chessboard. In \cite{Pauls} it was also proved that the $n$-Queens' problem has solution for every $n \ge 4$. It is immediate that a solution of the $n$-Queens' problem corresponds to a maximum stable set of the $n$-Queens' graph, $\mathcal{Q}(n)$, below defined, whose stability number is $n$. The problem of determining the number of solutions for an arbitrary $n$, which is equivalent to determining the number of maximum stable sets in $\mathcal{Q}(n)$, remains as an open problem. Recently, in \cite{MS2021} this problem was partially solved.\\

The $n$-Queens' problem has deserved the attention of researchers over the years, belonging to the historical roots of the mathematical approach to domination in graphs which goes back to 1862 \cite{Jaenisch1862}. In the 1970s, the research on the chessboard domination problems was redirected to more general problems of domination in graphs. Since then, this topic has attracted many researchers, turning it into an area of intense research. More recently, in 2017 \cite{GentJeffersenNightingale2017} it was proved that a variant of the $n$-Queens' problem (dating to 1850) called $n$-Queens' completion problem is $\mathbf{NP}$-Complete. In the $n$-Queens' completion problem, assuming that some queens are already placed, the question is to know how to place the rest of the queens, in case such placement be possible. After the publication of \cite{GentJeffersenNightingale2017}, the interest by the $n$-Queens' completion problem has increased. Probably, the motivation is that some researchers believe in the existence of a polynomial-time algorithm to solve this problem (see \cite{Dmitrii2018}). Therefore, if such an algorithm is found, then the problem that asks whether $\mathbf{P}$ is equal to $\mathbf{NP}$ is solved. This problem is one of the seven Millenium Prize Problems stated by the Clay Mathematics Institute which awards one million dollars to anyone who finds a solution to any of them.

The $n$-Queens' graph, $\mathcal{Q}(n)$, associated to the $n \times n$ chessboard $\mathcal{T}_n$, has $n^2$ vertices, each one corresponding to a square of the $n \times n$ chessboard. Two vertices of $\mathcal{Q}(n)$ are adjacent if and only if the corresponding squares in $\mathcal{T}_n$ are in the same row or in the same column or in the same diagonal.

The rows and columns of the chessboard are numbered from the top to the bottom and from the left to the right, respectively. We use the $(i,j) \in [n]^2$ coordinates as labels of the chessboard squares belonging to the $i^{th}$ row and $j^{th}$ column as well as labels of the corresponding vertices in $\mathcal{Q}(n)$. Alternatively, the $n^2$ squares of $\mathcal{T}_n$ and the corresponding $n^2$ vertices in $\mathcal{Q}(n)$ can be labeled by the numbers between $1$ and $n^2$ as it is exemplified in Figure~\ref{chessboard4}~, for the particular case of $\mathcal{T}_4$.

\begin{figure}[H]
	\centering
	\centering
	{	\setchessboard{smallboard}
		\chessboard[
		pgfstyle=
		{[base,at={\pgfpoint{0pt}{-0.4ex}}]text},maxfield=d4,label=false,showmover=false,text= \fontsize{1.2ex}{1.2ex}\bfseries{13},markregions={a1-a1},text= \fontsize{1.2ex}{1.2ex}\bfseries{9},markregions={a2-a2},text= \fontsize{1.2ex}{1.2ex}\bfseries{5},markregions={a3-a3},text= \fontsize{1.2ex}{1.2ex}\bfseries{1},markregions={a4-a4},text= \fontsize{1.2ex}{1.2ex}\bfseries{14},markregions={b1-b1},text= \fontsize{1.2ex}{1.2ex}\bfseries{10},markregions={b2-b2},text= \fontsize{1.2ex}{1.2ex}\bfseries{6},markregions={b3-b3},text= \fontsize{1.2ex}{1.2ex}\bfseries{2},markregions={b4-b4},text= \fontsize{1.2ex}{1.2ex}\bfseries{15},markregions={c1-c1},text= \fontsize{1.2ex}{1.2ex}\bfseries{11},markregions={c2-c2},text= \fontsize{1.2ex}{1.2ex}\bfseries{7},markregions={c3-c3},text= \fontsize{1.2ex}{1.2ex}\bfseries{3},markregions={c4-c4},text= \fontsize{1.2ex}{1.2ex}\bfseries{16},markregions={d1-d1},text= \fontsize{1.2ex}{1.2ex}\bfseries{12},markregions={d2-d2},text= \fontsize{1.2ex}{1.2ex}\bfseries{8},markregions={d3-d3},text= \fontsize{1.2ex}{1.2ex}\bfseries{4},markregions={d4-d4}]}
	\caption{Labeling of $\mathcal{T}_4$.}\label{chessboard4}
\end{figure}

Several combinatorial and spectral properties of the $n$-Queens' graphs are presented in \cite{ICosta2019}.

\subsection{The least eigenvalue of $\mathcal{Q}(n)$, for every $n \ge 4$}\label{sec4.2}

It is useful to start this subsection with the following theorem.

\begin{theorem} \label{nessufcond}
	Let $n\in\mathbb{N}$ such that $n \ge 4$.
	\begin{enumerate}
		\item $-4 \in \sigma(\mathcal{Q}(n))$ if and only if there exists a vector $X \in \mathbb{R}^{n^2}\setminus \{\mathbf{0}\}$
		such that
		\begin{enumerate}
			\item $\sum \limits_{j=1}^n {x_{(k,j)}} = 0$ and $\sum \limits_{i=1}^n {x_{(i,k)}} = 0$, for every $k \in [n]$, \label{nessuf_1}
			\item $\sum \limits_{i+j=k+2}{x_{(i,j)}} = 0$, for every $k \in [2n-3]$, \label{nessuf_2}
			\item $\sum \limits_{i-j=k+1-n} x_{(i,j)} = 0$, for every $k \in [2n-3]$, \label{nessuf_3}
			\item $x_{(1,1)} = x_{(1,n)} = x_{(n,1)} = x_{(n,n)} = 0$. \label{nessuf_4}
		\end{enumerate}
		In the positive case, $X$ is an eigenvector associated with the eigenvalue $-4$.\label{nessuf}
		
		\item If $\mu$ is the least eigenvalue of $\mathcal{Q}(n)$, then $\mu = -4$.\label{least_eigenvalue}
	\end{enumerate}
\end{theorem}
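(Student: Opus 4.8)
The plan is to apply Theorem~\ref{Main_result_1} directly, by exhibiting a specific ECP of $\mathcal{Q}(n)$ whose maximum clique degree equals $4$ and then translating the abstract conditions \ref{cond1} and \ref{cond2} into the concrete summation conditions \ref{nessuf_1}--\ref{nessuf_4}. The natural ECP to use is the one whose cliques are the maximal lines of the chessboard: each of the $n$ rows, each of the $n$ columns, each of the $2n-1$ diagonals in one direction, and each of the $2n-1$ anti-diagonals in the other direction. Every square lies on exactly one row, one column, one diagonal and one anti-diagonal, so a priori the clique degree of each vertex would be $4$; however, the two main corner-to-corner diagonals contain the corner squares together with the rest of the board, and more importantly the \emph{length-one} diagonals at the four corners are not edges, so they must be discarded from the ECP. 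After removing these degenerate singleton diagonals, one checks that the four corner vertices $(1,1),(1,n),(n,1),(n,n)$ lie on only one nontrivial diagonal each (they sit at the tip of a length-one diagonal in one direction), hence $m_v = 3$ for the corners while $m_v = 4$ for all remaining vertices. This is exactly what forces condition \ref{nessuf_4}: it is the instance of condition \ref{cond2} saying $x_v = 0$ whenever $m_v \neq m = 4$.

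First I would verify that the proposed family of lines really is an ECP: the row-cliques, column-cliques, diagonal-cliques and anti-diagonal-cliques are each cliques of $\mathcal{Q}(n)$ by the adjacency definition, and each edge of $\mathcal{Q}(n)$ (two squares sharing a row, column or diagonal) lies in exactly one of these cliques, so the parts partition $E(\mathcal{Q}(n))$. Next I would compute $m_v(P)$ carefully: a generic interior square meets a row, a column, and two genuine diagonals, giving $m_v = 4$; the only squares failing to reach $4$ are the four corners, which lie on a diagonal of length $1$ in one of the two diagonal directions, and since a single vertex spans no edge that "diagonal" is not a part of $P$, leaving $m_v = 3$. Establishing that no other vertices drop below $4$ — in particular that edge (non-corner) boundary squares still meet two diagonals of length at least $2$ — is the careful bookkeeping step, and for $n \ge 4$ this holds. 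Thus $m_{\mathcal{Q}(n)}(P) = 4$.

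With the ECP identified, part~\ref{nessuf} is then immediate from Theorem~\ref{Main_result_1}\,(\ref{marca2}): condition \ref{cond1} (each clique-sum of entries is zero) unwinds into precisely the four summation families \ref{nessuf_1} (rows and columns), \ref{nessuf_2} (one diagonal direction, indexed by $i+j$), and \ref{nessuf_3} (the other direction, indexed by $i-j$), where I would simply match the index ranges: the $2n-1$ lines in each diagonal direction become $2n-3$ nontrivial equations after discarding the two length-one corner diagonals (the ranges $k \in [2n-3]$ encode exactly this). Condition \ref{cond2} becomes \ref{nessuf_4}, as explained above. Part~\ref{least_eigenvalue} is the easy consequence: by Theorem~\ref{Main_result_1}\,(\ref{marca}) the least eigenvalue $\mu$ satisfies $\mu \ge -m = -4$, and since the Pauls solvability result (a non-attacking placement exists for $n \ge 4$) or a direct construction furnishes a nonzero vector satisfying \ref{nessuf_1}--\ref{nessuf_4}, Corollary~\ref{cor_4} guarantees $-4$ is actually attained, so $\mu = -4$.

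The main obstacle I anticipate is not the spectral argument — which is a clean application of the machinery already proved — but the combinatorial verification that the \emph{only} vertices with clique degree less than $4$ are the four corners, and the precise accounting of which diagonals survive as genuine cliques. One must be scrupulous about the length-one diagonals at all four corners and confirm that every other square, including those on the edges of the board, meets two diagonals each containing at least one further square; this is where the hypothesis $n \ge 4$ is genuinely used, and where an off-by-one in the index ranges $[2n-3]$ versus $[2n-1]$ could creep in. The nontrivial existence of $X$ for part~\ref{least_eigenvalue} is deferred to the explicit eigenvector construction that I would expect to follow in the next results, so here it suffices to invoke attainability.
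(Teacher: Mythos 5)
Your treatment of part \ref{nessuf} is correct and is essentially the paper's own argument: you take the ECP consisting of the $n$ rows, $n$ columns, $2n-3$ diagonals and $2n-3$ anti-diagonals (discarding the four length-one corner ``diagonals''), compute $m_v=3$ at the four corners and $m_v=4$ everywhere else, and then Theorem~\ref{Main_result_1}-\ref{marca2} translates exactly into conditions \ref{nessuf_1}--\ref{nessuf_4}. This matches the paper, including the index bookkeeping for $k\in[2n-3]$.

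Part \ref{least_eigenvalue}, however, has a genuine gap. The bound $\mu\ge-4$ is fine, but attainment requires actually exhibiting a nonzero vector satisfying \ref{nessuf_1}--\ref{nessuf_4}, and you never produce one: you defer it to a construction ``expected in the next results'' and say it suffices to ``invoke attainability'', which is circular, since attainability is precisely what part \ref{least_eigenvalue} asserts. Moreover, the alternative you offer --- Pauls' theorem that a non-attacking placement of $n$ queens exists for $n\ge4$ --- cannot supply such a vector: the indicator vector of a placement has row sums equal to $1$, not $0$, and even the difference of two placements, while summing to zero along every row and column, generally violates the diagonal conditions \ref{nessuf_2}--\ref{nessuf_3} (a diagonal may carry a queen in one placement and none in the other) as well as the corner condition \ref{nessuf_4}. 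The paper closes exactly this gap with an explicit vector: on a $4\times4$ induced subboard take entries $1$ at $(1,2),(2,4),(3,1),(4,3)$, entries $-1$ at $(1,3),(2,1),(3,4),(4,2)$, and $0$ elsewhere; this fulfills the conditions of part \ref{nessuf} for $\mathcal{Q}(4)$, and then Corollary~\ref{Main_result_2}, applied with $H=\mathcal{Q}(4)$ and the ECP $P$ of $\mathcal{Q}(n)$ satisfying $m_{\mathcal{Q}(n)}(P)=4=m_H(P')$, yields $\mu=-4$ for every $n\ge4$. Equivalently, you could extend that $\pm1$ vector by zeros to the full board and check \ref{nessuf_1}--\ref{nessuf_4} directly. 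Without some such explicit construction, part \ref{least_eigenvalue} remains unproven.
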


\begin{proof}
	Let us prove each of the items.
	\begin{enumerate}
		\item The proof follows taking into account that the summations \ref{nessuf_1}--\ref{nessuf_3} correspond to the summations \ref{cond1} in Theorem~\ref{Main_result_1}. Here, the cliques obtained from the ECP, $P$, of $\mathcal{Q}(n)$ are the cliques with vertices associated with each of the $n$ columns, $n$ rows, $2n-3$ left to right diagonals and $2n-3$ right to left diagonals. Denoting the vertices of $\mathcal{Q}(n)$ by their coordinates $(i,j)$ in the corresponding chessboard $\mathcal{T}_n$,
		$
		m_{(i,j)}(P) = \left\{\begin{array}{ll}
			3, & \hbox{if } (i,j) \in \{ 1, n\}^2;\\
			% & \hbox{if } (i,j) \in \{(1,1), (1,n), (n,1), (n,n)\};\\
			4, & \hbox{otherwise}
		\end{array}\right.
		$
		and thus $m_{\mathcal{Q}(n)}(P)=4$. Therefore, the equalities \ref{nessuf_4} correspond to the conditions \ref{cond2} in Theorem~\ref{Main_result_1}.
		\item Consider an induced subgraph $\mathcal{Q}(4)$ of $\mathcal{Q}(n)$. For instance, consider the subgraph induced by the vertices corresponding to the coordinates $(i,j) \in [4]^2$. Let $P'$ be the ECP as defined above in the proof of item \ref{nessuf} (for $n=4$). It is immediate that $m_{\mathcal{Q}(4)}(P')=4$ and the vector $X \in [4]^2$ such that
		$
		X_{(i,j)} = \left\{\begin{array}{rl}
			1, & \hbox{if } (i,j) \in \{(1,2), (2,4), (3,1), (4,3)\};\\
			-1, & \hbox{if } (i,j) \in \{(1,3), (2,1), (3,4), (4,2)\};\\
			0, & \hbox{otherwise}
		\end{array}\right.
		$ fulfills the conditions of item \ref{nessuf}. Therefore, $-m_{\mathcal{Q}(4)}(P')$ is the least eigenvalue of $\mathcal{Q}(4)$. Since $\mathcal{Q}(n)$ admits the ECP, $P$, described above in the proof of item \ref{nessuf} and thus $m_{\mathcal{Q}(n)}(P)=4$, applying Corollary~\ref{Main_result_2}, it follows that $-m_{\mathcal{Q}(4)}(P')$ is the least eigenvalue of $\mathcal{Q}(n)$.
	\end{enumerate}
\end{proof}

As a consequence of Theorem~\ref{nessufcond} we have the following result.

\begin{corollary}\label{non-main_condition1}
	Let $n \geq 4$ and $X \in \mathbb{R}^{n^2} \setminus \{\mathbf{0}\}$. Then $X \in \mathcal{E}_{\mathcal{Q}(n)}(-4)$ if and only if the conditions \ref{nessuf_1}--\ref{nessuf_4} of Theorem~\ref{nessufcond} hold.
\end{corollary}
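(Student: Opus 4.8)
The plan is to prove Corollary~\ref{non-main_condition1} as an essentially immediate consequence of Theorem~\ref{nessufcond}, part~\ref{nessuf}, together with Theorem~\ref{nessufcond}, part~\ref{least_eigenvalue}. The key observation is that the four summation conditions \ref{nessuf_1}--\ref{nessuf_4} are precisely the specialization of conditions \ref{cond1} and \ref{cond2} of Theorem~\ref{Main_result_1} to the particular ECP $P$ of $\mathcal{Q}(n)$ (the one whose parts are the rows, columns, and the two families of diagonals), and that by part~\ref{least_eigenvalue} we already know $-4$ is the least eigenvalue of $\mathcal{Q}(n)$ and equals $-m_{\mathcal{Q}(n)}(P)$.

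First I would fix the ECP $P$ described in the proof of Theorem~\ref{nessufcond}-\ref{nessuf}, recalling that $m_{\mathcal{Q}(n)}(P)=4$, so $-m_{\mathcal{Q}(n)}(P)=-4$. I would then invoke the ``eigenspace'' corollary stated earlier (the one asserting that for a vector $X\neq\mathbf{0}$, $X\in\mathcal{E}_G(-m)$ if and only if conditions \ref{cond1} and \ref{cond2} of Theorem~\ref{Main_result_1} hold), applied to $G=\mathcal{Q}(n)$ and $m=4$. The only genuine content is the dictionary between the abstract conditions \ref{cond1}, \ref{cond2} and the concrete conditions \ref{nessuf_1}--\ref{nessuf_4}: the cliques $V(G[E_i])$ are exactly the $n$ rows, $n$ columns, $2n-3$ ascending diagonals, and $2n-3$ descending diagonals, so the vanishing-sum condition \ref{cond1} over each part becomes exactly equations \ref{nessuf_1}, \ref{nessuf_2} and \ref{nessuf_3}; and since $m_{(i,j)}(P)=4$ except at the four corner squares $(i,j)\in\{1,n\}^2$ where it equals $3\neq 4$, condition \ref{cond2} (that $x_v=0$ whenever $m_v\neq m$) becomes exactly equation \ref{nessuf_4}.

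The forward direction then reads: if $X\in\mathcal{E}_{\mathcal{Q}(n)}(-4)$, then $-4$ is an eigenvalue with eigenvector $X$, and since $-4=-m_{\mathcal{Q}(n)}(P)$ is the least eigenvalue by part~\ref{least_eigenvalue}, Theorem~\ref{Main_result_1}-\ref{marca2} forces $X$ to satisfy \ref{cond1} and \ref{cond2}, which are \ref{nessuf_1}--\ref{nessuf_4}. Conversely, if $X\neq\mathbf{0}$ satisfies \ref{nessuf_1}--\ref{nessuf_4}, then it satisfies \ref{cond1} and \ref{cond2} for this $P$, so by Theorem~\ref{Main_result_1}-\ref{marca2} (the ``in the positive case'' clause) $X$ is an eigenvector for $-4=-m_{\mathcal{Q}(n)}(P)$, i.e. $X\in\mathcal{E}_{\mathcal{Q}(n)}(-4)$. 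I do not anticipate a real obstacle here: the statement is a direct reformulation, and the single point requiring care is verifying the clique/diagonal indexing so that the ranges $k\in[n]$ and $k\in[2n-3]$ in \ref{nessuf_1}--\ref{nessuf_3} account for every part of $P$ exactly once (in particular that the four one-cell corner diagonals are correctly excluded from the $2n-3$ count, which is what makes the corner condition \ref{nessuf_4} the right translation of \ref{cond2}). Once that bookkeeping is confirmed, the equivalence is formal.
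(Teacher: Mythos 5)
Your proposal is correct and matches the paper's (implicit) argument: the paper states this corollary as a direct consequence of Theorem~\ref{nessufcond}, i.e.\ precisely the specialization of the general eigenspace characterization (conditions \ref{cond1}--\ref{cond2} of Theorem~\ref{Main_result_1}) to the row/column/diagonal ECP of $\mathcal{Q}(n)$ with $m_{\mathcal{Q}(n)}(P)=4$, which is exactly the dictionary you set up. Your care with the bookkeeping (the $2n-3$ diagonal counts excluding the one-cell corner diagonals, and $m_{(i,j)}=3$ only at the four corners) is the right check and confirms the translation.
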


In what follows we will see that, for $n \ge 4$, $-4$ is an eigenvalue of $\mathcal{Q}(n)$ with multiplicity $(n-3)^2$. From Corollary~\ref{non-main_condition1} we may conclude that the multiplicity of $-4$ as an eigenvalue of $\mathcal{Q}(n)$ coincides with the corank of the coefficient matrix of the system of $6n-2$ linear equations
\ref{nessuf_1}--\ref{nessuf_4}. Therefore, to say that the multiplicity of $-4$ is $(n-3)^2$ is equivalent to say that the rank of the coefficient matrix of the system of $6n$ linear equations \ref{nessuf_1}--\ref{nessuf_4} is $6n - 9$ (since $n^2 - 6n + 9 = (n-3)^2$).

For an easier representation of the vectors, they are displayed over the chessboard. So the $\ell^{th}$ coordinate of a vector $X$ is displayed at the entry of the chessboard corresponding to the vertex $\ell$, i.e. at the entry $(i,j)=(\lceil\frac{\ell}{n}\rceil, \ell+n-n\lceil\frac{\ell}{n}\rceil)$. Then, the $\ell^{th}$ coordinate of $X$ can be denoted by $X_\ell$ or $X_{(i,j)}$.

Before we continue, we need to introduce the family of vectors
$$
\mathcal{F}_n = \{X_n^{(a,b)} \in \mathbb{R}^{n^2} \mid (a,b) \in [n-3]^2 \}
$$
where $X_n^{(a,b)}$ is the vector defined by

\begin{equation}\label{ev_components}
	\big[X_n^{(a,b)}\big]_{(i,j)} = \begin{cases}
		\big[X_4\big]_{(i-a+1,j-b+1)}, & \text{ if } (i,j)\in A \times B;\\
		0,                             & \text{otherwise,}
	\end{cases}
\end{equation}
with $A = \{a,a+1,a+2,a+3\}$, $B = \{b,b+1,b+2,b+3\}$ and $X_4$ is the vector presented in Table~\ref{vector_X4}

\begin{table}[h!]
	\[ %\scriptsize
	% \tiny
	\begin{array}{|c|c|c|c|} \hline
		0          & \textbf{1} & \textbf{-1}& 0           \\ \hline
		\textbf{-1}& 0          & 0          & \textbf{1}  \\ \hline
		\textbf{1} & 0          & 0          & \textbf{-1} \\ \hline
		0          &\textbf{-1} & \textbf{1} & 0           \\ \hline
	\end{array}
	\]
	\caption{The vector $X_4$.}\label{vector_X4}
\end{table}

For instance, for $n=5$, $\mathcal{F}_5$ is the family of four vectors depicted in Table~\ref{vetors_11-12-21-22}. %Tables~\ref{vetors_11-12} and \ref{vetors_21-22}.

\begin{table}[h!]\centering
	\begin{tabular}{cc}
		\begin{tabular}{|c|c|c|c|c|} \hline
			0          & \textbf{1} & \textbf{-1}& 0          & 0 \\ \hline
			\textbf{-1}& 0          & 0          & \textbf{1} & 0 \\ \hline
			\textbf{1} & 0          & 0          & \textbf{-1}& 0 \\ \hline
			0          &\textbf{-1} & \textbf{1} & 0          & 0 \\ \hline
			0          & 0          & 0          & 0          & 0 \\ \hline
		\end{tabular}
		%\quad
		&
		\begin{tabular}{|c|c|c|c|c|} \hline
			0 & 0          & \textbf{1} & \textbf{-1}& 0 \\ \hline
			0 & \textbf{-1}& 0          & 0          & \textbf{1} \\ \hline
			0 & \textbf{1} & 0          & 0          & \textbf{-1} \\ \hline
			0 & 0          & \textbf{-1}& \textbf{1} & 0 \\ \hline
			0 & 0          & 0          & 0          & 0 \\ \hline
		\end{tabular}
		\\ \\
		\begin{tabular}{|c|c|c|c|c|}\hline
			0          & 0          & 0          & 0          & 0 \\ \hline
			0          & \textbf{1} & \textbf{-1}& 0          & 0\\ \hline
			\textbf{-1}& 0          & 0          & \textbf{1} & 0 \\ \hline
			\textbf{1} & 0          & 0          & \textbf{-1}& 0 \\ \hline
			0          & \textbf{-1}& \textbf{1} & 0          & 0 \\ \hline
		\end{tabular}&
		\begin{tabular}{|c|c|c|c|c|}\hline
			0 & 0          & 0          & 0          & 0  \\ \hline
			0 & 0          & \textbf{1} & \textbf{-1}& 0 \\ \hline
			0 & \textbf{-1}& 0          & 0          & \textbf{1} \\ \hline
			0 & \textbf{1} & 0          & 0          & \textbf{-1} \\ \hline
			0 & 0          & \textbf{-1}& \textbf{1} & 0 \\ \hline
		\end{tabular}
	\end{tabular}
	\caption{The vectors $X_5^{(1,1)}$, $X_5^{(1,2)}$, $X_5^{(2,1)}$, and $X_5^{(2,2)}$.} \label{vetors_11-12-21-22}
\end{table}

\begin{theorem}\label{th-4eigenvalue}
	$-4$ is an eigenvalue of $\mathcal{Q}(n)$ with multiplicity $(n-3)^2$ and $\mathcal{F}_n$ is a basis for
	$\mathcal{E}_{\mathcal{Q}(n)} (-4)$.
\end{theorem}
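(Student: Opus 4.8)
The statement has two parts: that $\dim\mathcal{E}_{\mathcal{Q}(n)}(-4)=(n-3)^2$, and that the prescribed family $\mathcal{F}_n$ is a basis of this eigenspace. My plan is to obtain the identity by sandwiching the dimension: I will first exhibit $(n-3)^2$ linearly independent vectors of $\mathcal{E}_{\mathcal{Q}(n)}(-4)$, so that the multiplicity is at least $(n-3)^2$, and then show that the multiplicity cannot exceed $(n-3)^2$. Since $|\mathcal{F}_n|=(n-3)^2$ and its members will already have been shown independent, the two bounds together force $\mathcal{F}_n$ to be a basis.

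The first step is to check that every $X_n^{(a,b)}$ lies in $\mathcal{E}_{\mathcal{Q}(n)}(-4)$, which by Corollary~\ref{non-main_condition1} amounts to verifying conditions \ref{nessuf_1}--\ref{nessuf_4} of Theorem~\ref{nessufcond}. The key observation is that the $4\times 4$ template $X_4$ of Table~\ref{vector_X4} has the property that each of its rows, each of its columns and each of its diagonals in both directions sums to zero, and its four corner entries vanish. When this template is placed in the block $A\times B$ with $A=\{a,\dots,a+3\}$, $B=\{b,\dots,b+3\}$ and $1\le a,b\le n-3$, every line of $\mathcal{T}_n$ meets the block either not at all or in one full line (row, column or diagonal) of the template, so all the line-sum equations \ref{nessuf_1}--\ref{nessuf_3} hold; moreover each board corner $(1,1),(1,n),(n,1),(n,n)$ can be occupied only by a corner entry of the template, which is $0$, so \ref{nessuf_4} holds as well. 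This step is routine.

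For the lower bound I prove that $\mathcal{F}_n$ is linearly independent by a triangularity (pivot) argument with respect to the lexicographic order on $(a,b)\in[n-3]^2$. The point is that the top-left nonzero entry of $X_n^{(a,b)}$ sits at the board position $(a,b+1)$, where its value is $1$; a short case analysis of formula \eqref{ev_components} shows that every $X_n^{(a',b')}$ with a nonzero entry at $(a,b+1)$ has $(a',b')$ lexicographically at most $(a,b)$. Hence, given a vanishing combination $\sum c_{a,b}X_n^{(a,b)}=\mathbf{0}$ and the lexicographically least index $(a_0,b_0)$ with $c_{a_0,b_0}\ne 0$, evaluation at $(a_0,b_0+1)$ leaves only the term $c_{a_0,b_0}$, a contradiction. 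Thus $\dim\mathcal{E}_{\mathcal{Q}(n)}(-4)\ge(n-3)^2$.

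The main obstacle is the reverse inequality $\dim\mathcal{E}_{\mathcal{Q}(n)}(-4)\le(n-3)^2$, equivalently (as noted after Corollary~\ref{non-main_condition1}) that the coefficient matrix of the system \ref{nessuf_1}--\ref{nessuf_4} has rank $6n-9$. I would deduce it from the same pivot structure: given any solution $X$, sweeping through $(a,b)\in[n-3]^2$ in increasing lexicographic order and subtracting $c_{a,b}X_n^{(a,b)}$ with $c_{a,b}$ the current entry at $(a,b+1)$ produces, by the triangularity above, a solution $Z$ that vanishes at every pivot position, i.e. on the whole $(n-3)\times(n-3)$ sub-board formed by rows $1,\dots,n-3$ and columns $2,\dots,n-2$. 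It then remains to show that a solution of \ref{nessuf_1}--\ref{nessuf_4} vanishing on this sub-board is identically zero; this is the technical heart, and I would carry it out by back-substitution along the boundary, repeatedly using the row, column and the two diagonal sum conditions together with the corner conditions to force the remaining $6n-9$ entries to be zero. Once $Z=\mathbf{0}$, every solution is a combination of $\mathcal{F}_n$, so $\mathcal{F}_n$ spans the eigenspace; combined with the independence above, this proves both the dimension count and that $\mathcal{F}_n$ is a basis.
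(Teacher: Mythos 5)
Your proposal follows the same route as the paper's own proof: (1) membership of each $X_n^{(a,b)}$ in $\mathcal{E}_{\mathcal{Q}(n)}(-4)$ via Corollary~\ref{non-main_condition1}; (2) linear independence through the pivot entry $1$ at position $(a,b+1)$ together with the observation that any $X_n^{(a',b')}$ coming later in lexicographic order vanishes there (the paper orders the pairs by the integer $(n-3)(a-1)+b$, which is the same order, and performs the identical two-case analysis of \eqref{ev_components}); and (3) the upper bound $\dim\mathcal{E}_{\mathcal{Q}(n)}(-4)\le(n-3)^2$ obtained from the fact that an eigenvector is governed by its $(n-3)^2$ entries on rows $1,\dots,n-3$ and columns $2,\dots,n-2$. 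Your sweep, which subtracts suitable multiples of the $X_n^{(a,b)}$ in lexicographic order to produce a solution $Z$ vanishing at all pivot positions, is a correct and clean reformulation of the paper's step (3); the paper instead shows directly that every entry of an eigenvector is a linear function of the free entries, and the two formulations are logically equivalent.

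The genuine gap is that the claim to which you reduce everything --- that a solution of \ref{nessuf_1}--\ref{nessuf_4} vanishing on the sub-board $\{1,\dots,n-3\}\times\{2,\dots,n-2\}$ vanishes identically --- is precisely the technical heart of the theorem, and you do not prove it; you only announce that back-substitution ``would'' do it. This is where the paper spends most of its proof, and the step is not routine: one must exhibit an elimination order in which each of the $6n-9$ remaining entries is given by a clique equation all of whose other terms are already known, and such an order is delicate and interleaved. In the paper it runs: corners first; then column $1$, where $x_{(i,1)}$ for $2\le i\le n-2$ comes from anti-diagonal sums and only afterwards $x_{(n-1,1)}$ from the column-$1$ sum; then columns $n-1$ and $n$ must be filled alternately row by row, since $x_{(i,n-1)}$ (row-$i$ sum) needs $x_{(i,n)}$ while $x_{(i+1,n)}$ (diagonal sum) needs $x_{(i,n-1)}$; finally rows $n-2,n-1,n$ on columns $j=n-2,n-3,\dots,2$ are processed in \emph{decreasing} order of $j$, because the anti-diagonal equation giving $x_{(n,j)}$ uses $x_{(n-1,j+1)}$ and $x_{(n-2,j+2)}$ from the previous rounds (an increasing sweep would fail here). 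Without exhibiting such an order, the upper bound on the multiplicity --- and hence the statement that $\mathcal{F}_n$ spans $\mathcal{E}_{\mathcal{Q}(n)}(-4)$ --- remains unproven; everything else in your proposal is correct.
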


\begin{proof}
	First, note that every element of $\mathcal{F}_n$ belongs to $\mathcal{E}_{\mathcal{Q}(n)} (-4)$. Indeed, if $X = \left( x_{(i,j)} \right) \in \mathcal{F}_n$, then the conditions \ref{nessuf_1}--\ref{nessuf_4} of Theorem~\ref{nessufcond} hold and hence by Corollary~\ref{non-main_condition1} $X \in \mathcal{E}_{\mathcal{Q}(n)} (-4)$.
	
	Second, $\mathcal{F}_n$ is linearly independent and so $\dim \mathcal{E}_{\mathcal{Q}(n)} (-4) \geq (n-3)^2$. For otherwise there would be scalars $\alpha_{1,1}, \ldots, \alpha_{n-3,n-3} \in \mathbb{R}$, not all equal to zero, such that
	\begin{equation} \label{comblin}
		\alpha_{1,1} X_n^{(1,1)} + \cdots + \alpha_{n-3,n-3} X_n^{(n-3,n-3)} = \mathbf{0}.
	\end{equation}
	
	Let $(n-3)(a-1)+b$ be the smallest integer such that $\alpha_{a,b} \neq 0$. Since by \eqref{ev_components} $\big[X_n^{(a,b)}\big]_{(a,b+1)} = \big[X_4\big]_{(1,2)}=1$, the entry $(a,b+1)$ of $\alpha_{a,b}\big[X_n^{(a,b)}\big]$ is $\alpha_{a,b}$. Consider any other vector $\big[X_n^{(a',b')}\big]$ such that $(n-3)(a'-1)+b'>(n-3)(a-1)+b$ which implies (i) $a'>a$ or (ii) $a'=a$ and $b'>b$. Denoting $A'=\{a', \dots, a'+3\}$ and $B'=\{b', \dots, b'+3\},$ taking in to account \eqref{ev_components}, we may conclude the following.
	\begin{itemize}
		\item[(i)] $a'>a$ implies $(a,b+1) \not \in A' \times B'$ and thus $\big[X_n^{(a',b')}\big]_{(a,b+1)} =0$.
		\item[(ii)] For $a'=a$ and $b'>b+1$ the conclusion is the same as above. Assuming $a'=a$ and $b'=b+1$ it follows that
		$\big[X_n^{(a',b')}\big]_{(a,b+1)}=\big[X_4\big]_{(1,1)}=0$.
	\end{itemize}
	Therefore, entry $(a,b+1)$ of the left-hand side of (\ref{comblin}) is $\alpha_{a,b} \neq 0$ while the same entry on the right-hand side
	of (\ref{comblin}) is 0, which is a contradiction.
	
	Finally, we show that $\dim (\mathcal{E}_{\mathcal{Q}(n)} (-4)) \le (n-3)^2$ by showing that every element of the
	subspace generated by $\mathcal{F}_n$ is completely determined by entries $x_{(i,j+1)}$ such that $(i,j) \in [n-3]^2$.
	
	Let $S \subseteq [n]^2$ be the set of indexes $(p,q) \in [n]^2$ such that the entry $x_{(p,q)}$ of
	$X \in \mathcal{E}_{\mathcal{Q}(n)} (-4)$ is completely determined by the entries $x_{(i,j+1)}$, with
	$(i,j) \in [n-3]^2$. Clearly, $[n-3] \times ([n-2] \setminus \{ 1 \}) \subseteq S$. Since $x_{(1,1)} = x_{(n,1)} = 0$, it follows that
	\begin{eqnarray*}
		x_{(i,1)} &=& - \sum_{k=2}^i x_{(i+1-k,k)}, \ \text{for every $2 \leq i \leq n-2$,}\\
		x_{(n-1,1)} &=& - \sum_{k=2}^{n-2} x_{(k,1)} \\
		&=& \hphantom{+} x_{(1,2)} \\
		& & + x_{(2,2)} + x_{(1,3)} \\
		& & \vdots \\
		& & + x_{(n-3,2)} + \cdots + x_{(2,n-3)} + x_{(1,n-2)} \\
		&=& \sum_{\substack{i,j \geq 1 \\ i+j \leq n-2}} x_{(i,j+1)}
	\end{eqnarray*}
	and then $[n] \times \{ 1 \} \subseteq S$. Additionally, since $x_{(1,n)} = x_{(n,n)} = 0$ it follows that
	\begin{align*}
		& x_{(i,n-1)} = - \sum_{j=1}^{n-2} x_{(i,j)} - x_{(i,n)}, \ \text{for every $1 \leq i \leq n-3$,} \\
		& x_{(i+1,n)} = - \sum_{k=1}^{i} x_{(k,n-1-i+k)}, \ \text{for every $1 \leq i \leq n-3$,} \\
		& x_{(n-1,n)} = - \sum_{i=2}^{n-2} x_{(i,n)}, \quad  x_{(n-2,n-1)} = - \sum_{k=1}^{n-3} x_{(k,k+1)} - x_{(n-1,n)}, \\
		& x_{(n,n-1)} = - x_{(n-1,n)}, \quad x_{(n-1,n-1)} = - \sum_{i=2}^{n-2} x_{(i,n-1)} - x_{n,n-1}
	\end{align*}
	and thus $[n] \times \{ n-1, n \} \subseteq S$. Finally, since for every $2 \le j \le n-2$
	\begin{align*}
		& x_{(n,j)} = - \sum_{k=j}^{n-1} x_{(k,n+j-k)}, \\
		& x_{(n-2,j)} = - \sum_{k=1}^{j-1} x_{(n-2-j+k,k)} - x_{(n-1,j+1)} - x_{(n,j+2)},\\
		& x_{(n-1,j)} = - \sum_{i=1}^{n-2} x_{(i,j)} - x_{(n,j)},
	\end{align*}
	and consequently $\{ n-2, n-1, n \} \times ([n-2] \setminus \{ 1 \} ) \subseteq S$.
\end{proof}

\section{Some conclusions and remarks}\label{sec5}

Consider the graph $H$ and the ECP, $P'$, depicted in Figure~\ref{figura_3}. It is immediate that $m_H(P')=2$, that the vector $X \in \{-1, 0, 1\}^5$, whose entries are displayed on the right, fulfills the necessary and sufficient conditions \ref{cond1}--\ref{cond2} of Theorem~\ref{Main_result_1} and thus its least eigenvalue is $-m_H(P') = -2$.

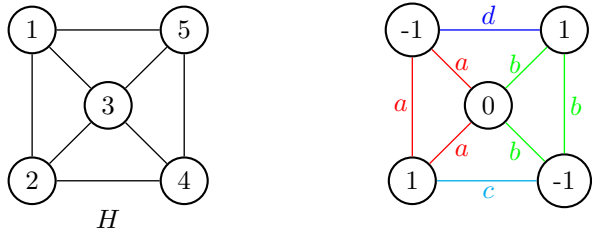
\begin{figure}[ht]
	\centering
	\begin{tikzpicture}
		%vertices G
		\begin{scope}[every node/.style={circle,thick,draw}]
			\node (1) at (-1,1) {1};
			\node (2) at (-1,-1) {2};
			\node (3) at (0,0) {3};
			\node (4) at (1,-1) {4};
			\node (5) at (1,1) {5};
			\node (1') at (4,1) {-1};
			\node (2') at (4,-1) {1};
			\node (3') at (5,0) {0};
			\node (4') at (6,-1) {-1};
			\node (5') at (6,1) {1};
		\end{scope}
		\draw (0,-1.5) node {$H$};
		
		%arestas G
		\begin{scope}[>={Stealth[black]},
			every edge/.style={draw=black,line width=0.5pt}]
			\path (1) edge node {} (2);
			\path (1) edge node {} (3);
			\path (1) edge node {} (5);
			\path (2) edge node {} (3);
			\path (2) edge node {} (4);
			\path (3) edge node {} (4);
			\path (3) edge node {} (5);
			\path (4) edge node {} (5);
			\path (1') edge[color=red] node {} (2');
			\path (1') edge[color=red] node {} (3');
			\path (1') edge[color=blue] node {} (5');
			\path (2') edge[color=red] node {} (3');
			\path (2') edge[color=cyan] node {} (4');
			\path (3') edge[color=green] node {} (4');
			\path (3') edge[color=green] node {} (5');
			\path (4') edge[color=green] node {} (5');
		\end{scope}
		\draw (4.65,0.55) node {\color{red}$a$};
		\draw (3.85,0) node {\color{red}$a$};
		\draw (4.65,-0.6) node {\color{red}$a$};
		\draw (5,-1.15) node {\color{cyan}$c$};
		\draw (5,1.2) node {\color{blue}$d$};
		\draw (5.35,0.55) node {\color{green}$b$};
		\draw (6.15,0) node {\color{green}$b$};
		\draw (5.35,-0.6) node {\color{green}$b$};
		
	\end{tikzpicture}
	\caption{A graph with a content decomposition $P'$, where the edges with the same color, among the colors {\color{red}$a$}, {\color{green}$b$}, {\color{cyan} $c$}, and {\color{blue} $d$}, belong to the same part. The labels of the vertices on the right are the entries of the vector $X$ considered in Theorem~\ref{Main_result_1}.}\label{figura_3}
\end{figure}

The graph $H$ in Figure~\ref{figura_3} is an induced subgraph of the graph $G_2$ in Figure~\ref{figura_4}. By Corollary~\ref{Main_result_2}, if $\mu$ is the least eigenvalue of $G_2$, then $\mu \le -m_H(P') = -2$.

Now consider a graph $H$ with an ECP, $P$. Assume that for some $k \ge m_{H}(P)$, $G_k \in \mathcal{G}(H)$ admits an ECP, $P_k$, and exists a vector $X \in \{-1,0,1\}^{|V(G_k)|}$ indexed by the vertices of $G_k$ fulfilling the necessary and sufficient conditions of Theorem~\ref{Main_result_1} and thus the least eigenvalue of $G_k$ is $-k$. Then, choosing the identity as the permutation $\pi_k$ and defining a vector $Y \in \{-1, 0, 1\}^{|V(G_{k+1})|}$ indexed by the vertices of $G_{k+1}$ such that $Y_{v}= X_{v}$ for every $v \in V(G_k)$ and $Y_{v'} = - X_{v}$ for every $v' \in V(G'_k)$, it is immediate that $Y$ fulfills the necessary and sufficient conditions of Theorem~\ref{Main_result_1} and thus the least eigenvalue of $G_{k+1}$ is $-(k+1)$. Fixing $X$ and $Y$ as described above, if $G_{k+1}$ is obtained from $G_k$ replacing the identity permutation $\pi_k$ by a more general permutation $\pi'_k$ such that
\begin{equation}
	\forall v \in V(G_k) \qquad \pi'_k(v) = u' \in V(G'_k), \;\; \hbox{ if } Y_{u'} = - X_v,
	\label{alternative_permutation}
\end{equation}
it follows that the least eigenvalue of $G_{k+1}$ remains equal to $-(k+1)$.\\

Let us consider each of the two cases.

\begin{itemize}
	\item[(i)] When $\pi_k$ is the identity permutation we may conclude that $\sigma(G_{k+1}) = \sigma(G_k) \pm 1$, where $\sigma(G_k) \pm 1 = \{\lambda \pm 1 \mid \lambda \in \sigma(G_k) \}$ with possible repetitions. Indeed, assuming that $\lambda$ is an eigenvalue of $G_k$ and $X$ is an eigenvector associated with $\lambda$, then
	\begin{eqnarray*}
		A(G_{k+1})\left(\begin{array}{r}
			X \\
			\pm X \\
		\end{array}\right)&=&\left(\begin{array}{cc}
			A(G_k) &   I \\
			I   & A(G_k) \\
		\end{array}\right)\left(\begin{array}{r}
			X \\
			\pm X \\
		\end{array}\right)=(\lambda \pm1)\left(\begin{array}{r}
			X\\
			\pm X
		\end{array}\right),
	\end{eqnarray*}
	where $I$ is the identity matrix of order $|V(G_k)|$. Therefore, $\sigma(G_{k+1})=\sigma(G_k) \pm 1$.
	In particular, considering the graph $G_2$ as the graph $H$ depicted in Figure~\ref{figura_3}, since
	$\sigma(G_2) = \{-2, 1-\sqrt{5}, 0, 0, 1+\sqrt{5}\}$ it follows that
	\[
	\sigma(G_3) = \{-3, -\sqrt{5}, -1, -1, \sqrt{5}, -1, 2-\sqrt{5}, 1, 1, 2+\sqrt{5}\}.
	\]
	\item[(ii)] When $\pi_k$ is replaced by a more general permutation $\pi'_k$, as defined in \eqref{alternative_permutation}, the equality
	$\sigma(G_{k+1}) = \sigma(G_k) \pm 1$ may not be true. For instance, let $G_3 \in~\mathcal{G}(H)$ be the graph depicted in Figure~\ref{figura_4} which is obtained from $G_2$ as above described,
	using the ECP $P=\{\{12, 13, 35\}, \{24\}, \{34, 35, 45\}, \{36\},$ $\{15,16,56\}\}\}$ of $G_2$, the vector $X^T=(1,-1,0,1,-1,0)$ and the permutation $\pi'_2=[156423]$. Then it follows that
	$\sigma(G_3) \ne \sigma(G_2) \pm 1$. However, $-k$ and $-k-1$ are the least eigenvalues of $G_k$ and $G_{k+1}$, respectively, and this property is invariant to the permutations $\pi_k \in \{\pi'_k \mid \pi'_k \text{ is defined by \eqref{alternative_permutation}}\}$.
	
	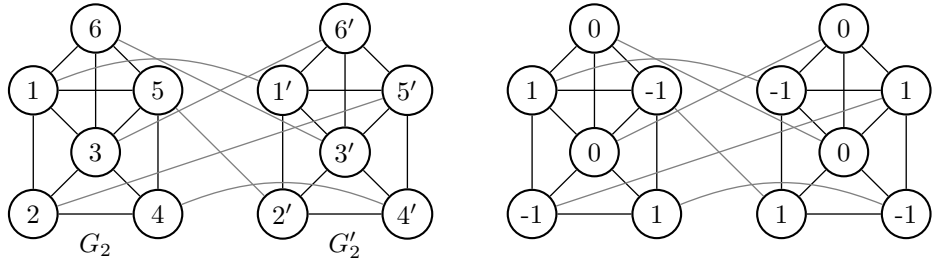
\begin{figure}[H]
		
		\centering
		\begin{tikzpicture}[scale=0.82]
			%vertices G
			\begin{scope}[every node/.style={circle,thick,draw,align=center,inner sep=0pt,text width=6mm}]
				%vértices da esquerda
				\node (1) at (-1,1) {1};
				\node (2) at (-1,-1) {2};
				\node (3) at (0,0) {3};
				\node (4) at (1,-1) {4};
				\node (5) at (1,1) {5};
				\node (6) at (0,2) {6};
				\node (1') at (3,1) {$1'$};
				\node (2') at (3,-1) {$2'$};
				\node (3') at (4,0) {$3'$};
				\node (4') at (5,-1) {$4'$};
				\node (5') at (5,1) {$5'$};
				\node (6') at (4,2) {$6'$};
				%vértices da direita
				\node (v1) at (7,1) {1};
				\node (v2) at (7,-1) {-1};
				\node (v3) at (8,0) {0};
				\node (v4) at (9,-1) {1};
				\node (v5) at (9,1) {-1};
				\node (v6) at (8,2) { 0};
				\node (v1') at (11,1) {-1};
				\node (v2') at (11,-1) {1};
				\node (v3') at (12,0) {0};
				\node (v4') at (13,-1) {-1};
				\node (v5') at (13,1) {1};
				\node (v6') at (12,2) {0};
			\end{scope}
			\draw (0,-1.5) node {$G_2$};
			\draw (4,-1.5) node {$G_2'$};
			
			%arestas G
			\begin{scope}[>={Stealth[black]},
				every edge/.style={draw=black,line width=0.5pt}]
				%esquerda
				\path (1) edge node {} (2);
				\path (1) edge node {} (3);
				\path (1) edge node {} (5);
				\path (1) edge node {} (6);
				\path (2) edge node {} (3);
				\path (2) edge node {} (4);
				\path (3) edge node {} (6);
				\path (4) edge node {} (3);
				\path (4) edge node {} (5);
				\path (5) edge node {} (3);
				\path (5) edge node {} (6);
				\path (1') edge node {} (2');
				\path (1') edge node {} (3');
				\path (1') edge node {} (5');
				\path (1') edge node {} (6');
				\path (2') edge node {} (3');
				\path (2') edge node {} (4');
				\path (3') edge node {} (6');
				\path (4') edge node {} (3');
				\path (4') edge node {} (5');
				\path (5') edge node {} (3');
				\path (5') edge node {} (6');
				%%arestas da permutação
				\path (1) edge[bend left=22,color=gray] node {} (1');
				\path (2) edge[color=gray] node {} (5');
				\path (4) edge[bend left=22,color=gray] node {} (4');
				\path (5) edge[color=gray] node {} (2');
				\path (3) edge[color=gray] node {} (6');
				\path (6) edge[color=gray] node {} (3');
				
				%direita
				\path (v1) edge node {} (v2);
				\path (v1) edge node {} (v3);
				\path (v1) edge node {} (v5);
				\path (v1) edge node {} (v6);
				\path (v2) edge node {} (v3);
				\path (v2) edge node {} (v4);
				\path (v3) edge node {} (v6);
				\path (v4) edge node {} (v3);
				\path (v4) edge node {} (v5);
				\path (v5) edge node {} (v3);
				\path (v5) edge node {} (v6);
				\path (v1') edge node {} (v2');
				\path (v1') edge node {} (v3');
				\path (v1') edge node {} (v5');
				\path (v1') edge node {} (v6');
				\path (v2') edge node {} (v3');
				\path (v2') edge node {} (v4');
				\path (v3') edge node {} (v6');
				\path (v4') edge node {} (v3');
				\path (v4') edge node {} (v5');
				\path (v5') edge node {} (v3');
				\path (v5') edge node {} (v6');
				%%arestas da permutação
				\path (v1) edge[bend left=22,color=gray] node {} (v1');
				\path (v2) edge[color=gray] node {} (v5');
				\path (v4) edge[bend left=22,color=gray] node {} (v4');
				\path (v5) edge[color=gray] node {} (v2');
				\path (v3) edge[color=gray] node {} (v6');
				\path (v6) edge[color=gray] node {} (v3');
			\end{scope}
		\end{tikzpicture}
		\caption{Graph $G_3$ obtained from $G_2$ using the vector $X^T=(1,-1,0,1,-1,0)$ and  the permutation $\pi'_2=[156423]$. The labels of the vertices on the right are the entries of the vector $Y$ obtained from $X$ as above described.}\label{figura_4}
	\end{figure}
\end{itemize}
In any case, if the least eigenvalue of $G_k$ for  $k=m_H(P)$ is $\mu=-m_H(P)$, then, by induction on $k$, the least
eigenvalue of $G_k$ is $-k$, for every $k \ge m_H(P)$. \\

Finally, it is immediate that if $H$ is an integral graph, i.e., a graph whose eigenvalues are all integers and the family $\mathcal{G}(H)$ is produced using the identity permutation, then all the graphs of $\mathcal{G}(H)$ are integral graphs. \\

\medskip\textbf{Acknowledgments.}
The authors thank Sebastian Cioab\u{a} for drawing our attention to the Corollary 3.1 that appears in \cite{CioabaElzingaGregory2020} and Edwin van Dam for his encouraging comments. This work is supported by the Center for Research and Development in Mathematics and Applications (CIDMA) which is financed by national funds through Funda\c{c}\~{a}o para a Ci\^{e}ncia e a Tecnologia (FCT), within project UIDB/04106/2020. I.S.C. also thanks the support of FCT via PhD Scholarship PD/BD/150538/2019.

\end{document}